\numberwithin{equation}{section}
\newtheorem{theorem}{Theorem}[section]
\newtheorem{lemma}[theorem]{Lemma}
\newtheorem{corollary}[theorem]{Corollary}
\newtheorem{proposition}[theorem]{Proposition}
\theoremstyle{definition}
\newtheorem{remark}[theorem]{Remark}
\newtheorem{definition}[theorem]{Definition}
\theoremstyle{remark}
\newcommand{\cB}{{\mathcal B}}
\newcommand{\cD}{{\mathcal D}}
\newcommand{\cP}{{\mathcal P}}
\newcommand{\cQ}{{\mathcal Q}}
\newcommand{\N}{{\mathbb N}}
\newcommand{\R}{{\mathbb R}}
\newcommand{\Z}{{\mathbb Z}}
\def\eps{\varepsilon}
\def\lm{\lambda}
\def\0{\emptyset}
\def\6{\partial}
\def\8{\infty}
\def\l{\left}
\def\r{\right}
\DeclareMathOperator*{\esssup}{ess\,sup}
\DeclareMathOperator*{\essinf}{ess\,inf}
\begin{document}

\title
[The Kakeya maximal operator on the variable Lebesgue spaces]
{The Kakeya maximal operator on the variable Lebesgue spaces}

\author[H.~Saito]{Hiroki Saito}
\address{Department of Mathematics and Information Sciences, Tokyo Metropolitan University, 1-1 Minami Ohsawa, Hachioji, Tokyo 192-0397, Japan}
\email{j1107703@gmail.com}
\author[H.~Tanaka]{Hitoshi Tanaka}
\address{Graduate School of Mathematical Sciences, The University of Tokyo, Tokyo, 153-8914, Japan}
\email{htanaka@ms.u-tokyo.ac.jp}

\thanks{
The second author is supported by 
the FMSP program at Graduate School of Mathematical Sciences, the University of Tokyo, 
and Grant-in-Aid for Scientific Research (C) (No.~23540187), 
the Japan Society for the Promotion of Science. 
}

\subjclass[2010]{42B25, 46E30.}

\keywords{
Nikodym maximal operator;
Kakeya maximal operator;
variable Lebesgue spaces.
}
\date{}

\begin{abstract}
We shall verify 
the Kakeya (Nikodym) maximal operator 
$K_{N}$, $N\gg 1$, 
is bounded on the variable Lebesgue space 
$L^{p(\cdot)}(\R^2)$ 
when the exponent function $p(\cdot)$ is 
$N$-modified locally log-H\"{o}lder continuous and 
log-H\"{o}lder continuous at infinity. 
\end{abstract}

\maketitle

\section{Introduction}\label{sec1}
The purpose of this paper is to investigate 
the boundedness of the Kakeya (Nikodym) maximal operator on the variable Lebesgue spaces. 
Given a measurable function 
$p(\cdot):\,\R^n\to[1,\8)$, 
we define the variable Lebesgue space 
$L^{p(\cdot)}(\R^n)$ 
to be the set of measurable functions such that 
for some $\lm>0$,
$$
\rho_{p(\cdot)}(f/\lm)
=
\int_{\R^n}\l(\frac{|f(x)|}{\lm}\r)^{p(x)}\,dx
<\8.
$$
$L^{p(\cdot)}(\R^n)$ is a Banach space when equipped with the norm 
$$
\|f\|_{p(\cdot)}
=
\|f\|_{L^{p(\cdot)}(\R^n)}
=
\inf\{\lm>0:\,\rho_{p(\cdot)}(f/\lm)\le 1\}.
$$
The variable Lebesgue space 
$L^{p(\cdot)}(\R^n)$ 
generalizes the classical Lebesgue space 
$L^p(\R^n)$: 
if $p(\cdot)\equiv p_0$, then 
$L^{p(\cdot)}(\R^n)=L^{p_0}(\R^n)$.
Variable Lebesgue spaces have been studied in the past twenty years 
(see \cite{CaCrFi,CrDiFi,CrFiMaPe,Die1,Die2,Die3,DiHaNe,KR,Le,Ne}).
For a locally integrable function $f$ on $\R^n$ 
the Hardy-Littlewood maximal operator $M$ is defined by 
$$
Mf(x)
=
\sup_{x\in Q\in\cQ}
\fint_{Q}|f(y)|\,dy,
$$
where we have used 
$\cQ$ to denote the family of all cubes in $\R^n$ 
with sides parallel to the coordinate axes 
and $\fint_{Q}f(x)\,dx$ 
to denote the usual integral average of $f$ over $Q$.
Let $\cP(\R^n)$ be the class of all functions $p(\cdot)$ 
for which the Hardy-Littlewood maximal operator $M$ 
is bounded on $L^{p(\cdot)}(\R^n)$.
By the classical Hardy-Littlewood maximal theorem, 
any constant function $p(\cdot)\equiv p_0$ 
with $1<p_0<\8$ belongs to $\cP(\R^n)$. 
In \cite{Die2}, 
L.~Diening showed that 
$p(\cdot)\in\cP(\R^n)$ 
if and only if 
there exists a positive constant $c$ 
such that 
for any family of pairwise disjoint cubes $\pi$ and 
any $f\in L^{p(\cdot)}(\R^n)$,
$$
\l\|\sum_{Q\in\pi}\fint_{Q}|f(y)|\,dy\,\chi_{Q}\r\|_{L^{p(\cdot)}(\R^n)}
\le c
\|f\|_{L^{p(\cdot)}(\R^n)},
$$
where $\chi_{E}$ stands for the characteristic function of 
a measurable set $E\subset\R^n$.
This result implies, for example, that 
$p(\cdot)\in\cP(\R^n)$
if and only if 
$p'(\cdot)\in\cP(\R^n)$, where 
$p'(x)=\frac{p(x)}{p(x)-1}$.
However, 
since this result is very general, 
some simple sufficient conditions 
for which $p(\cdot)\in\cP(\R^n)$ 
has been studied by many authors 
(see \cite{Die1,CrFiNe,Le,Ne}).
In \cite{CrFiNe}, 
D.~Cruz-Uribe, A.~Fiorenza and C.~J.~Neugebauer 
give a new and simpler proof of 
the boundedness of the Hardy-Littlewood maximal operator $M$ 
on variable Lebesgue space 
$L^{p(\cdot)}(\R^n)$.

\begin{definition}\label{def1.1}
\begin{description}
\item[{\rm(a)}] 
We say that $p(\cdot)$ is 
locally log-H\"{o}lder continuous 
if there exists a positive constant $c_0$ 
such that 
$$
|p(x)-p(y)|\log\l(\frac{1}{|x-y|}\r)
\le c_0,\qquad
x,y\in\R^n,\,|x-y|<1.
$$
\item[{\rm(b)}] 
We say that $p(\cdot)$ is 
log-H\"{o}lder continuous at infinity 
if there exist constants $c_{\8}$ and $p(\8)$ 
such that 
$$
|p(x)-p(\8)|\log(e+|x|)\le c_{\8},
\qquad x\in\R^n.
$$
\item[{\rm(c)}] 
Given a measurable set $E\subset\R^n$, 
let 
$$
p_{-}(E)=\essinf_{x\in E}p(x)
\text{ and }
p_{+}(E)=\esssup_{x\in E}p(x).
$$
If $E=\R^n$, then 
we simply write $p_{-}$ and $p_{+}$. 
\end{description}
\end{definition}

\begin{proposition}[{\rm\cite[Theorem 1.2]{CrFiNe}}]\label{prp1.2}
Let $1<p_{-}\le p_{+}<\8$. 
Suppose that $p(\cdot)$ is 
locally log-H\"{o}lder continuous and 
log-H\"{o}lder continuous at infinity. 
Then there exists a positive constant $C$ 
independent of $f$ such that 
$$
\|Mf\|_{L^{p(\cdot)}(\R^n)}
\le C
\|f\|_{L^{p(\cdot)}(\R^n)}.
$$
\end{proposition}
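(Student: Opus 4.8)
The plan is to reduce the norm inequality to a modular estimate and then to establish a single pointwise bound for $Mf$ that can be integrated using nothing more than the classical Hardy--Littlewood theorem on the fixed-exponent space $L^{p_-}(\R^n)$. Since $p_+<\infty$, one has $\|g\|_{p(\cdot)}\le 1$ if and only if $\rho_{p(\cdot)}(g)\le 1$; hence, by homogeneity, it suffices to produce a constant $C$, depending only on $n$ and the log-H\"older data of $p(\cdot)$, such that every $f$ with $\rho_{p(\cdot)}(f)\le 1$ satisfies $\rho_{p(\cdot)}(Mf/C)\le 1$. Replacing $f$ by $|f|$, I may assume $f\ge 0$ and $\int_{\R^n}f(y)^{p(y)}\,dy\le 1$.

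The heart of the matter is the pointwise inequality
$$
(Mf(x))^{p(x)}
\le
C\,\bigl[M(f^{p(\cdot)/p_-})(x)\bigr]^{p_-}
+
C\,\bigl[Mg(x)\bigr]^{p_-},
\qquad g(y)=(e+|y|)^{-n},
$$
valid for every $x\in\R^n$. Taking a supremum over cubes $Q\ni x$, this reduces to the per-cube estimate
$$
\left(\fint_Q f\,dy\right)^{p(x)}
\le
C\left(\fint_Q f(y)^{p(y)/p_-}\,dy+\fint_Q g(y)\,dy\right)^{p_-},
$$
since $(\,\cdot\,)^{p(x)}$ is increasing and each average on the right is dominated by the corresponding maximal function at $x$. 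To prove the per-cube estimate I would freeze the exponent by two mechanisms. Local log-H\"older continuity yields $|Q|^{-|p(x)-p(y)|}\le C$ for all $y\in Q$ once $|Q|\le 1$, which lets me trade the variable exponent $p(x)$ for the values $p(y)$ on $Q$ at the cost of a fixed constant; log-H\"older continuity at infinity yields $(e+|y|)^{|p(y)-p(\infty)|}\le C$, which governs the large-$|y|$ regime and produces the decay contribution carried by $g$. A case analysis according to whether $\fint_Q f$ exceeds $1$, together with the elementary inequalities $f(y)\le f(y)^{p(y)/p_-}$ on $\{f>1\}$ and $f(y)\le 1$ on $\{f\le 1\}$, then closes the per-cube bound.

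With the pointwise inequality in hand the conclusion is immediate. Integrating over $\R^n$ and putting $F=f^{p(\cdot)/p_-}$, I have $\int_{\R^n}F^{p_-}=\int_{\R^n}f^{p(\cdot)}=\rho_{p(\cdot)}(f)\le 1$, so $F\in L^{p_-}(\R^n)$ with norm at most one; and $\int_{\R^n}g^{p_-}\,dy=\int_{\R^n}(e+|y|)^{-np_-}\,dy<\infty$ precisely because $np_->n$. As $p_->1$, the classical Hardy--Littlewood maximal theorem applies on $L^{p_-}(\R^n)$, giving $\int_{\R^n}\bigl[MF\bigr]^{p_-}\le C$ and $\int_{\R^n}\bigl[Mg\bigr]^{p_-}\le C$. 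Therefore $\int_{\R^n}(Mf(x))^{p(x)}\,dx\le C$, and a routine scaling argument (again using $p_+<\infty$) converts this into $\rho_{p(\cdot)}(Mf/C)\le 1$, which is the desired bound.

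I expect the per-cube freezing estimate to be the principal obstacle. The delicate points are reconciling the position-dependent exponent $p(x)$ on the left with the single exponent $p_-$ on the right---which forces the case analysis on the size of $\fint_Q f$ and careful bookkeeping of the oscillation of $p(\cdot)$ over $Q$---and arranging a decay term that is genuinely \emph{integrable}. It is worth stressing why the passage to $F=f^{p(\cdot)/p_-}$ is unavoidable: the superficially natural bound $(Mf(x))^{p(x)}\le C\,M(f^{p(\cdot)})(x)+(\text{decay})$ cannot be integrated, because $M$ is not bounded on $L^1(\R^n)$; keeping every maximal function at the level $L^{p_-}$ with $p_->1$, so that $M$ acts where it is bounded, is exactly what rescues the argument.
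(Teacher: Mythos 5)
Your overall architecture is the right one---it is essentially the Cruz-Uribe--Fiorenza--Neugebauer / Cruz-Uribe--Diening--Fiorenza scheme that the paper quotes for this proposition and adapts in Section \ref{sec3}---but the step you yourself call ``the heart of the matter'' is false as stated. The per-cube estimate
$$
\Bigl(\fint_Q f\,dy\Bigr)^{p(x)}
\le
C\Bigl(\fint_Q f(y)^{p(y)/p_-}\,dy+\fint_Q g(y)\,dy\Bigr)^{p_-}
$$
fails for large cubes. Take $n=1$, an exponent with $p(0)=2=p_{-}$ and $p(\infty)=4$ (say $p(y)=4-2\min\{1,1/\log(e+|y|)\}$, which is locally log-H\"older and log-H\"older at infinity), $Q=[-R,R]$, $x=0$, and $f=R^{-1/2}\chi_{[R/2,R]}$, so that $\rho_{p(\cdot)}(f)\le\frac12$. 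Then the left side is $(R^{-1/2}/4)^{2}=R^{-1}/16$, while $\fint_Q f^{p(\cdot)/2}\approx R^{-1}/4$ and $\fint_Q g\approx R^{-1}\log R$, so the right side is $O(R^{-2}\log^{2}R)$: the inequality fails by a factor of order $R/\log^{2}R$. The pointwise inequality you are aiming for does survive in this example, but only because $Mg(0)\approx e^{-1}$ is a constant, vastly larger than $\fint_Q g$ over the cube that realizes $Mf(0)$; in other words the pointwise bound is \emph{not} obtained by proving the per-cube bound and taking suprema, and your reduction collapses exactly at the crux. Note also that the failure occurs with $\fint_Q f\le 1$ and is caused by the largeness of $Q$, so your proposed ``case analysis on whether $\fint_Q f$ exceeds $1$'' does not address it.

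The standard repair is visible in the paper's own proof of Theorem \ref{thm1.5}: split $f=f_1+f_2$ with $f_1=f\chi_{\{f>1\}}$ and $f_2=f\chi_{\{f\le1\}}$. For $f_1$ your per-cube estimate is true \emph{without} any $g$-term: Jensen's inequality with exponent $p_{-}(Q)/p_{-}$, the normalization $\int_Q f_1^{p(\cdot)}\le 1$, and the freezing bound $|Q|^{-(p_{+}(Q)-p_{-}(Q))}\le C$ coming from local log-H\"older continuity give $(\fint_Q f_1)^{p(x)}\le C(\fint_Q f_1^{p(\cdot)/p_-})^{p_-}$, which integrates against the classical maximal theorem on $L^{p_-}$ exactly as you describe. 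For the bounded part $f_2$, however, the decay must be attached to the point $x$ (an error of size $(e+|x|)^{-np_-}$, i.e.\ $g(x)^{p_-}$, not $\fint_Q g$); equivalently one applies the two-sided Lemma \ref{lem3.2} to trade the variable exponent for the constant $p(\infty)$, invokes the classical maximal theorem at the exponent $p(\infty)>1$, and then trades back. Until the $f_2$ part is rewritten along one of these lines, the argument has a genuine gap at its central step.
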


For a locally integrable function $f$ on $\R^2$ 
the Kakeya (Nikodym) maximal operator 
$K_{N}$, $N\gg 1$, 
is defined by 
$$
K_{N}f(x)
=
\sup_{x\in R\in\cB_{N}}
\fint_{R}|f(y)|\,dy,
$$
where $\cB_{N}$ denotes 
the set of all rectangles in $\R^2$ 
with eccentricity $N$ 
(the ratio of the length of long-sides and short-sides is equal to $N$).
In this paper, 
we investigate the boundedness property of 
the Kakeya maximal operator $K_{N}$ 
on the variable Lebesgue space 
$L^{p(\cdot)}(\R^2)$. 
It is well known that 
(see \cite{Co,Gr,Str})
\begin{equation}\label{1.1}
\|K_{N}f\|_{L^p(\R^2)}
\le C_p
(\log N)^{2/p}
\|f\|_{L^p(\R^2)}
\text{ for }
2\le p\le\8.
\end{equation}
One might naturally expect that 
$$
\|K_N\|_{L^{p(\cdot)}(\R^2)\to L^{p(\cdot)}(\R^2)}
\le C
(\log N)^{2/p_{-}}
\text{ when }
2\le p_{-}\le p_{+}<\8,
$$
where 
$\|T\|_{L^{p(\cdot)}(\R^2)\to L^{p(\cdot)}(\R^2)}$ 
denotes the operator norm 
$T:\,L^{p(\cdot)}(\R^2)\to L^{p(\cdot)}(\R^2)$. 
However, 
we have the following theorem. 

\begin{theorem}\label{thm1.3}
Let $N\gg 1$ and 
$1<p_{-}<p_{+}<\8$. 
Suppose that $K_{N}$ is bounded 
from $L^{p(\cdot)}(\R^2)$ 
to $L^{p(\cdot)}(\R^2)$ and that 
$p(\cdot)$ is continuous. 
Then there exist a positive constant $C$, 
independent of $N$, and 
a small constant $\eps>0$ such that 
$$
\|K_N\|_{L^{p(\cdot)}(\R^2)\to L^{p(\cdot)}(\R^2)}
\ge C
N^{\eps}.
$$
\end{theorem}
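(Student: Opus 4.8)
The plan is to bound the operator norm from below by testing against a carefully placed pair of characteristic functions and invoking duality. By the Hölder inequality on variable Lebesgue spaces, $\int_{\R^2}|hg|\,dx\le C_p\|h\|_{p(\cdot)}\|g\|_{p'(\cdot)}$ with $C_p$ depending only on $p_-,p_+$, so that for nonnegative $f,g$ one has
\[
\|K_N\|_{L^{p(\cdot)}(\R^2)\to L^{p(\cdot)}(\R^2)}
\ge
C_p^{-1}\,\frac{\langle K_Nf,g\rangle}{\|f\|_{p(\cdot)}\,\|g\|_{p'(\cdot)}}.
\]
Since $p(\cdot)$ is continuous and $p_-<p_+$, its range is a nondegenerate interval, so I may fix two levels $p_-<\tau_1<\tau_2<p_+$ together with a line segment $S\subset\R^2$ of some fixed length $d$ along which $p\le\tau_1$ on an initial sub-segment and $p\ge\tau_2$ on a terminal sub-segment, each of fixed positive length; such a segment exists by the intermediate value theorem applied between a point where $p<\tau_1$ and a point where $p>\tau_2$. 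Let $R$ be the rectangle of width $d/N$ (hence eccentricity $N$) having $S$ as its central axis, let $E,F\subset R$ be the sub-rectangles lying over the two sub-segments above, and set $f=\chi_E$, $g=\chi_F$.

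For the numerator, since $E\subset R\in\cB_N$ and $F\subset R$, every $x\in F$ satisfies $K_Nf(x)\ge\fint_R\chi_E=|E|/|R|$, which equals $\ell_1/d$ and is therefore independent of $N$; hence $\langle K_Nf,g\rangle\ge(|E|/|R|)\,|F|$. For the denominators I will use an elementary modular estimate: when $|E|\le1$ and $\esssup_E p\le\tau_1$, testing $\rho_{p(\cdot)}$ at $\lambda=|E|^{1/\tau_1}$ gives $\lambda^{-p(x)}\le\lambda^{-\tau_1}=|E|^{-1}$ on $E$, whence $\rho_{p(\cdot)}(\chi_E/\lambda)\le1$ and $\|\chi_E\|_{p(\cdot)}\le|E|^{1/\tau_1}$; likewise $\|\chi_F\|_{p'(\cdot)}\le|F|^{1/\tau_2'}$ with $\tau_2'=\tau_2/(\tau_2-1)$, because $\esssup_F p'\le\tau_2'$. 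As $N\to\infty$ the width shrinks, so $|E|,|F|\sim d/N$ while $|E|/|R|$ stays bounded below; for $N\gg1$ both sets have measure at most one, so the two bounds apply.

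Combining the three estimates, the ratio is at least a constant times $(d/N)\,\big/\,((d/N)^{1/\tau_1}(d/N)^{1/\tau_2'})$. Since $1/\tau_2'=1-1/\tau_2$, the exponent of $d/N$ collapses to $1/\tau_2-1/\tau_1<0$, and therefore
\[
\|K_N\|_{L^{p(\cdot)}(\R^2)\to L^{p(\cdot)}(\R^2)}
\ge
C\,N^{\,1/\tau_1-1/\tau_2},
\]
with $C$ independent of $N$. This proves the theorem with $\eps=1/\tau_1-1/\tau_2=(\tau_2-\tau_1)/(\tau_1\tau_2)>0$.

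The delicate point is the separation $\tau_1<\tau_2$. If instead $E$ and $F$ abutted at a common value $\tau$ of $p$, then $\|\chi_E\|_{p(\cdot)}$ would be governed by $\esssup_E p$ and $\|\chi_F\|_{p'(\cdot)}$ by $\esssup_F p'=(\essinf_F p)'$, both equal to $\tau$, and the two exponents of $d/N$ would cancel exactly, leaving only a bound uniform in $N$. Thus the gain is driven precisely by the gap between the exponent seen on the source $E$ and that seen on the target $F$; this is where non-constancy of $p(\cdot)$ is essential, and producing a single eccentricity-$N$ rectangle that simultaneously meets a $\{p\le\tau_1\}$ region and a $\{p\ge\tau_2\}$ region, with a buffer between them, is the main geometric step. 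The continuity hypothesis is what guarantees such a configuration and lets the buffer be a fixed fraction of the tube, so that all implicit constants remain independent of $N$.
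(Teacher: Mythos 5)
Your proof is correct and follows essentially the same route as the paper's: both reduce, via duality/H\"{o}lder, to a lower bound of the form $\frac{1}{|R|}\|\chi\|_{p(\cdot)}\|\chi\|_{p'(\cdot)}$ for a thin eccentricity-$N$ rectangle joining a region where $p$ is close to its infimum to one where it is close to its supremum, and both extract the gain $N^{1/\tau_1-1/\tau_2}$ from constant-exponent comparisons at the two ends (the paper uses the norm-conjugate formula and embeddings $L^{p(\cdot)}\hookrightarrow L^{p_{\pm}}$ over two small squares $B_1,B_2$ with $p_{+}(B_1)<p_{-}(B_2)$, while you use explicit test functions $\chi_E,\chi_F$ and direct modular estimates, which is the same computation). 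The only point to tighten is that your modular bounds need $p\le\tau_1$ and $p\ge\tau_2$ on the two-dimensional sub-rectangles $E,F$, not merely on the sub-segments of $S$; this holds for $N\gg1$ by continuity, since the width $d/N$ shrinks the rectangles into the corresponding open neighborhoods.
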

Thus, 
in the framework of the variable Lebesgue spaces,
we are interested in a small positive  constant $c$ 
such that $N^c$ bounds from above 
$\|K_N\|_{L^{p(\cdot)}(\R^2)\to L^{p(\cdot)}(\R^2)}$.

The main result of this paper is the following (Theorem \ref{thm1.5}). 
The technique of the proof of our theorem 
is due to \cite{CrDiFi}, which is used 
the machinery of Calder\'{o}n-Zygmund cubes.
We apply this technique to the rectangles in $\cB_{N}$.
For the precise estimate we need the following notion. 

\begin{definition}\label{def1.4}
Let $N\gg 1$. 
We say that $p(\cdot)$ is 
$N$-modified locally log-H\"{o}lder continuous 
if there exists a positive constant $c_{N}$ 
such that 
$$
\l|\frac{1}{p(x)}-\frac{1}{p(y)}\r|
\log\l(\frac{N}{|x-y|^2}\r)
\le c_{N}\log N,
\qquad
x,y\in\R^2,\,|x-y|<\sqrt{N}.
$$
\end{definition}

\begin{theorem}\label{thm1.5}
Let $N\gg 1$ and 
$2\le p_{-}\le p_{+}<\8$. 
Suppose that $p(\cdot)$ is 
$N$-modified locally log-H\"{o}lder continuous and 
log-H\"{o}lder continuous at infinity. 
Then $K_{N}$ is bounded 
from $L^{p(\cdot)}(\R^2)$ 
to $L^{p(\cdot)}(\R^2)$ and 
$$
\|K_N\|_{L^{p(\cdot)}(\R^2)\to L^{p(\cdot)}(\R^2)}
\le C
N^{p_{-}c_{N}}(\log N)^{2/p_{-}},
$$
where the constant $C$ is independent of $N$. 
\end{theorem}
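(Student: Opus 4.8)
The plan is to pass from the operator-norm inequality to a modular inequality and then to run the Calder\'on--Zygmund argument of \cite{CrDiFi} with the eccentric rectangles $\cB_N$ in place of cubes, using the constant-exponent bound \eqref{1.1} as the analytic input. By homogeneity of the norm it is enough to treat $f\ge0$ with $\|f\|_{p(\cdot)}=1$, so that $\rho_{p(\cdot)}(f)=\int_{\R^2}f^{p(x)}\,dx\le1$. Writing $\lm=C\,N^{p_-c_N}(\log N)^{2/p_-}$ for the asserted norm, it then suffices to prove the modular estimate
$$
\int_{\R^2}\big(K_Nf(x)\big)^{p(x)}\,dx\le\lm^{p_-}.
$$
Indeed, since $\lm\ge1$ and $p(x)\ge p_-$ we have $\lm^{-p(x)}\le\lm^{-p_-}$, whence $\rho_{p(\cdot)}(K_Nf/\lm)\le\lm^{-p_-}\int_{\R^2}(K_Nf)^{p(x)}\,dx\le1$, which is exactly $\|K_Nf\|_{p(\cdot)}\le\lm$. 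The hypothesis $2\le p_-$ is what allows \eqref{1.1} to be invoked at the constant exponent $p_-$.

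The core is a Calder\'on--Zygmund decomposition adapted to $\cB_N$. First I would linearize: for each $x$ choose a rectangle $R(x)\in\cB_N$ with $x\in R(x)$ and $\fint_{R(x)}f\ge\tfrac12K_Nf(x)$, and group the points according to the dyadic size of the area $|R(x)|$, which plays the role of the Calder\'on--Zygmund stopping scale. On a rectangle $R\in\cB_N$ with $x\in R$ and $|R|\le1$ two points $x,y\in R$ satisfy $|x-y|<\sqrt N$ and $\log(N/|x-y|^2)\ge\log(1/|R|)$, so Definition \ref{def1.4} yields the oscillation bound
$$
\Big(\tfrac{1}{p_-(R)}-\tfrac{1}{p(x)}\Big)\log\tfrac{1}{|R|}\le c_N\log N,\qquad x\in R.
$$
This is precisely calibrated so that freezing the variable exponent, i.e. comparing $(\fint_Rf)^{p(x)}$ with the essentially constant value $(\fint_Rf)^{p_-(R)}$ on each selected rectangle, costs only a controlled power of $N$; this localized exponent-freezing is the exact analogue, at the rectangle scale, of the bounded-oscillation step in \cite{CrDiFi}, and, paired with \eqref{1.1} at the exponent $p_-$, it is the mechanism that produces the factor $N^{p_-c_N}(\log N)^{2/p_-}$ after taking the $p_-$-th root. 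Summing the frozen estimates over the chosen rectangles and over the dyadic scales, and using \eqref{1.1} to control the resulting averages, gives the displayed modular bound.

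The support of $f$ being unbounded is handled by the log-H\"older continuity at infinity (Definition \ref{def1.1}(b)). As in \cite{CrFiNe}, on the region where the relevant averages are at most $1$ one dominates the otherwise non-integrable constant contribution by an integrable tail $h(y)=(e+|y|)^{-m}$ with a suitable $m>2$; absorbing $h$ into the stopping quantity keeps the summation over scales convergent and introduces no loss in $N$. Combining this tail control with the scale-by-scale estimate of the previous paragraph delivers the modular bound, and hence the theorem.

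I expect the decisive obstacle to lie in the Calder\'on--Zygmund step for $\cB_N$. Unlike axis-parallel cubes, rectangles of fixed eccentricity $N$ neither tile nor nest, so there is no honest maximal-dyadic selection to rely on; the substitute must be built by a stopping-time and linearization argument, and the geometric overlap of the chosen rectangles---the C\'ordoba-type estimate responsible for the $\log N$ losses---must be accounted for exactly, so as to reproduce $(\log N)^{2/p_-}$ rather than a larger power. Keeping the oscillation estimate strictly local, so that Definition \ref{def1.4} applies with $|x-y|<\sqrt N$ and $|R|\le1$ while the sharp power $N^{p_-c_N}$ survives the summation over scales, is the delicate bookkeeping at the heart of the proof; it is for exactly this purpose that the modified log-H\"older condition is phrased in reciprocal form with the weight $\log(N/|x-y|^2)$.
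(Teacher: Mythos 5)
Your outline assembles the right ingredients --- the modular reduction, the oscillation estimate drawn from Definition \ref{def1.4} (this is exactly Lemma \ref{lem3.1}: $|R|^{1/p_{+}(R)-1/p_{-}(R)}\le N^{c_{N}}$), the constant-exponent bound \eqref{1.1} as the analytic input, and the tail control at infinity of Lemma \ref{lem3.2} --- but it stops short of the one idea that makes the argument close, and you flag this yourself: you do not know how to perform the Calder\'on--Zygmund selection for rectangles in $\cB_{N}$, and you speculate that a C\'ordoba-type overlap estimate for the selected rectangles must be tracked. The paper's resolution is much simpler and avoids overlap counting entirely. Fix a dyadic generation $k$, assign to \emph{every} cube $Q\in\cD_k$ one rectangle $R(Q)\in\cB_{N}$ with $R(Q)\supset Q$, and set
$$
T_kf(x)=\sum_{Q\in\cD_k}\fint_{R(Q)}f(y)\,dy\,\chi_{Q}(x).
$$
The cubes $Q$ are pairwise disjoint, $T_kf\le K_{N}f$ pointwise for any choice of $\{R(Q)\}$, and a suitable choice makes $T_kf\to K_{N}f$ as $k\to\8$, so Fatou reduces the theorem to a bound on $T_k$ uniform in $k$. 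The overlap of the rectangles never enters: after freezing the exponent on each $R(Q)$, the sum over the disjoint $Q$ is dominated by $\int_{\R^2}K_{N}[g](x)^{p_{-}}\,dx$ (resp.\ with exponent $p(\8)$) for an auxiliary $g$ with modular at most $1$, to which \eqref{1.1} applies as a black box. Your stopping-time selection on the dyadic size of $|R(x)|$ is precisely the step that has no analogue for non-nesting rectangles, so as written the core of the proof is missing.

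A second, smaller gap: the exponent-freezing step requires splitting $f=f_1+f_2$ with $f_1=f\chi_{\{f>1\}}$ and $f_2=f\chi_{\{f\le 1\}}$. For $f_1$ one needs $f_1\ge 1$ on its support, so that H\"older's inequality at the exponent $p_{-}(R(Q))/p_{-}$ can be followed by the absorption $\bigl(\int_{R(Q)}f_1(y)^{p(y)}\,dy\bigr)^{\theta}\le 1$ (using $\rho_{p(\cdot)}(f)\le 1$) to dispose of the excess power of $p(x)$; for $f_2$ one needs $f_2\le 1$ so that Lemma \ref{lem3.2} converts $p(x)$ into $p(\8)$ before \eqref{1.1} is applied. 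Your sketch gestures at the second half (``averages at most $1$'') but never isolates $f_1$, and without that the local freezing argument does not produce the clean factor $N^{p_{-}c_{N}}(\log N)^{2/p_{-}}$.
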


\begin{corollary}\label{cor1.6}
Let $N\gg 1$ and 
$2\le p_{-}\le p_{+}<\8$. 
Suppose that $p(\cdot)$ is 
locally log-H\"{o}lder continuous and 
log-H\"{o}lder continuous at infinity. 
Then $K_{N}$ is bounded 
from $L^{p(\cdot)}(\R^2)$ 
to $L^{p(\cdot)}(\R^2)$ and 
$$
\|K_N\|_{L^{p(\cdot)}(\R^2)\to L^{p(\cdot)}(\R^2)}
\le C
N^{p_{-}C_{N}}(\log N)^{2/p_{-}},
$$
where the constant $C$ is independent of $N$ and 
$$
C_{N}
=
\sup_{\substack{x,y\in\R^2 \\ |x-y|<\sqrt{N}}}
\l|\frac{1}{p(x)}-\frac{1}{p(y)}\r|.
$$
\end{corollary}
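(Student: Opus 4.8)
The plan is to derive Corollary \ref{cor1.6} directly from Theorem \ref{thm1.5} by checking that the ordinary local log-H\"older continuity of $p(\cdot)$ forces the $N$-modified local log-H\"older continuity of Definition \ref{def1.4}, with an admissible constant $c_N$ that exceeds $C_N$ by a term of order $1/\log N$. Since the log-H\"older continuity at infinity hypothesis is identical in both statements, only the local condition requires attention, and the whole argument reduces to an elementary manipulation of logarithms together with the observation that an $O(1/\log N)$ perturbation of the exponent $c_N$ changes $N^{p_-c_N}$ by at most a constant factor.

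First I would pass from the log-H\"older continuity of $p(\cdot)$ to that of $1/p(\cdot)$. Because $p_-\ge 2$ and $p_+<\8$, for $x,y\in\R^2$ we have
$$
\l|\frac{1}{p(x)}-\frac{1}{p(y)}\r|
=
\frac{|p(x)-p(y)|}{p(x)p(y)}
\le
\frac{|p(x)-p(y)|}{p_-^2},
$$
so that the local log-H\"older constant $c_0$ of $p(\cdot)$ yields
$$
\l|\frac{1}{p(x)}-\frac{1}{p(y)}\r|
\log\l(\frac{1}{|x-y|}\r)
\le
\frac{c_0}{p_-^2},
\qquad
|x-y|<1.
$$
Next I would use the splitting $\log(N/|x-y|^2)=\log N+2\log(1/|x-y|)$, valid for $x\ne y$. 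Fix $x,y$ with $|x-y|<\sqrt N$. Bounding the first piece by $C_N$ (whose definition applies since $|x-y|<\sqrt N$) gives $|1/p(x)-1/p(y)|\log N\le C_N\log N$. For the second piece, when $|x-y|\ge 1$ the factor $\log(1/|x-y|)$ is nonpositive and contributes nothing, while for $|x-y|<1$ the previous step gives $2|1/p(x)-1/p(y)|\log(1/|x-y|)\le 2c_0/p_-^2$. Combining, for all $x,y$ with $|x-y|<\sqrt N$,
$$
\l|\frac{1}{p(x)}-\frac{1}{p(y)}\r|
\log\l(\frac{N}{|x-y|^2}\r)
\le
C_N\log N+\frac{2c_0}{p_-^2}
=
\l(C_N+\frac{2c_0}{p_-^2\log N}\r)\log N,
$$
so $p(\cdot)$ is $N$-modified locally log-H\"older continuous with $c_N=C_N+2c_0/(p_-^2\log N)$.

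Finally I would feed this $c_N$ into Theorem \ref{thm1.5}. Since $p_-c_N=p_-C_N+2c_0/(p_-\log N)$, the factor $N^{p_-c_N}$ equals $N^{p_-C_N}\cdot N^{2c_0/(p_-\log N)}=N^{p_-C_N}e^{2c_0/p_-}$, and $e^{2c_0/p_-}$ depends only on the fixed quantities $c_0$ and $p_-$, not on $N$. Absorbing it into the constant supplied by Theorem \ref{thm1.5} yields
$$
\|K_N\|_{L^{p(\cdot)}(\R^2)\to L^{p(\cdot)}(\R^2)}
\le
CN^{p_-C_N}(\log N)^{2/p_-}
$$
with $C$ independent of $N$, as claimed. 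There is no genuine obstacle here: the corollary is a reduction to Theorem \ref{thm1.5}, and the only point requiring care is the uniform-in-$N$ conversion of the exponent, namely verifying that the $1/\log N$ correction to $c_N$ is exactly what keeps the extra factor $N^{2c_0/(p_-\log N)}$ bounded by a constant.
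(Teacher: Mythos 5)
Your proposal is correct and is essentially the paper's (implicit) derivation: the corollary is stated without proof as a direct consequence of Theorem \ref{thm1.5}, and the intended step is exactly your verification that local log-H\"older continuity gives the $N$-modified condition with $c_N=C_N+O(1/\log N)$, the $O(1/\log N)$ correction being harmless since $N^{c/\log N}=e^c$. The splitting $\log(N/|x-y|^2)=\log N+2\log(1/|x-y|)$, the passage from $p$ to $1/p$ via $p_-^2$, and the absorption of $e^{2c_0/p_-}$ into the constant are all exactly what is needed.
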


\begin{remark}\label{rem1.7}
We remark that 
$$
p_{-}C_{N}
\le
p_{-}\l(\frac{1}{p_{-}}-\frac{1}{p_{+}}\r)
=
1-\frac{p_{-}}{p_{+}}.
$$
\end{remark}

The letter $C$ will be used for constants 
that may change from one occurrence to another. 
Constants with subscripts, such as $C_1$, $C_2$, do not change 
in different occurrences. 

\section{Proof of Theorem \ref{thm1.3}}\label{sec2}
The following argument is due to T.~Kopaliani \cite{Ko} (see also \cite{KM}).
Recall that the conjugate function $p'(x)$ is defined by 
$\frac{1}{p'(x)}+\frac{1}{p(x)}=1$.
The following generalized H\"{o}lder inequality and 
a duality relation can be found in \cite{KR}:
$$
\int_{\R^2}|f(x)g(x)|\,dx
\le 2
\|f\|_{p(\cdot)}\|g\|_{p'(\cdot)},
$$
$$
\|f\|_{p(\cdot)}
\le
\sup_{\|g\|_{p'(\cdot)}\le 1}
\int_{\R^2}|f(x)g(x)|\,dx.
$$
Suppose that $K_{N}$ is bounded 
from $L^{p(\cdot)}(\R^2)$ 
to $L^{p(\cdot)}(\R^2)$. Then 
for every rectangle $R\in\cB_{N}$ 
we have
$$
\|K_{N}\|_{L^{p(\cdot)}\to L^{p(\cdot)}}
\ge
\|K_{N}f\|_{p(\cdot)}
\ge
\l\|
\fint_{R}f(y)\,dy\,\chi_{R}
\r\|_{p(\cdot)}
=
\fint_{R}f(y)\,dy\,
\|\chi_R\|_{p(\cdot)}
$$
for all nonnegative $f$ with 
$\|f\|_{p(\cdot)}\le 1$.
Taking supremum all such $f$,
we have
\begin{equation}\label{2.1}
\|K_{N}\|_{L^{p(\cdot)}\to L^{p(\cdot)}}
\ge
\frac{1}{|R|}
\|\chi_{R}\|_{p'(\cdot)}
\|\chi_R\|_{p(\cdot)}
\end{equation}
for all $R\in\cB_{N}$,
where $|R|$ denotes the area of the rectangle $R$.

Suppose that $p(\cdot)$ is continuous and is not constant. 
Then we can find two closed squares 
$B_1$ and $B_2$ in $\R^2$
with $|B_1|,|B_2|<1$,
such that
\begin{equation}\label{2.2}
p_{+}(B_1)<p_{-}(B_2).
\end{equation}
Without loss of generality we may assume that 
$$
B_1=[0,s]\times[0,s]
\text{ and }
B_2=[0,s]\times[t-s,t]
\text{ for some }
t>s>0.
$$
We take $N$ with $t/N<s$ and let 
$R=[0,t/N]\times[0,t]$.
Then we have 
$R\in\cB_{N}$ and 
$$
|R\cap B_1|
=
|R\cap B_2|
=
\frac{st}{N}.
$$
Observe now that the following embeddings hold:
\begin{align*}
L^{p(\cdot)}(B_2)
&\hookrightarrow
L^{p_{-}(B_2)}(B_2),
\\
L^{p'(\cdot)}(B_1)
&\hookrightarrow
L^{(p_{+}(B_1))'}(B_1),
\end{align*}
where 
$\frac{1}{(p_{+}(B_1))'}+\frac{1}{p_{+}(B_1)}=1$.
It follows that 
\begin{align*}
\frac{1}{|R|}
\|\chi_{R}\|_{p(\cdot)}
\|\chi_{R}\|_{p'(\cdot)}
&\ge
\frac{1}{|R|}
\|\chi_{R\cap B_2}\|_{L^{p(\cdot)}(B_2)}
\|\chi_{R\cap B_1}\|_{L^{p'(\cdot)}(B_1)}
\\ &\ge
\frac{1}{|R|}
\|\chi_{R\cap B_2}\|_{L^{p_{-}(B_2)}(B_2)}
\|\chi_{R\cap B_1}\|_{L^{(p_{+}(B_1))'}(B_1)}
\\ &=
|R|^{-1}
\cdot
|R\cap B_2|^{\frac{1}{p_{-}(B_2)}}
\cdot
|R\cap B_1|^{1-\frac{1}{p_{+}(B_1)}}
\\ &=
t^{-2}
\cdot
(st)^{1+\frac{1}{p_{-}(B_2)}-\frac{1}{p_{+}(B_1)}}
\cdot
N^{\frac{1}{p_{+}(B_1)}-\frac{1}{p_{-}(B_2)}},
\end{align*}
where we have used $|B_1|,|B_2|<1$. 
Since by \eqref{2.2} 
$\frac{1}{p_{+}(B_1)}-\frac{1}{p_{-}(B_2)}>0$,
we conclude by \eqref{2.1} that 
$\|K_{N}\|_{L^{p(\cdot)}\to L^{p(\cdot)}}$
has a lower bound $N^{\eps}$ with 
$\eps>0$.

\section{Proof of Theorem \ref{thm1.5}}\label{sec3}
In what follows we shall prove Theorem \ref{thm1.5}. 
We need two lemmas. 

\begin{lemma}\label{lem3.1}
Let $N\gg 1$. 
Suppose that $p(\cdot)$ is 
$N$-modified locally log-H\"{o}lder continuous. 
Then, for any rectangle $R\in\cB_{N}$, 
$$
|R|^{\frac{1}{p_{+}(R)}-\frac{1}{p_{-}(R)}}
\le N^{c_{N}}.
$$
\end{lemma}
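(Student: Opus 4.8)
The plan is to take logarithms and reduce Lemma~\ref{lem3.1} to an oscillation estimate for $1/p(\cdot)$ on $R$. Because $p_{+}(R)\ge p_{-}(R)$ the exponent $\frac{1}{p_{+}(R)}-\frac{1}{p_{-}(R)}$ is nonpositive, so for $|R|\ge 1$ the quantity $|R|^{1/p_{+}(R)-1/p_{-}(R)}$ is already $\le 1\le N^{c_{N}}$ and nothing is needed. I would therefore assume $|R|<1$, where $\log(1/|R|)>0$ and the assertion is equivalent to
$$
\left(\frac{1}{p_{-}(R)}-\frac{1}{p_{+}(R)}\right)\log\frac{1}{|R|}\le c_{N}\log N.
$$

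First I would note that the $N$-modified local log-H\"older condition makes $1/p(\cdot)$ continuous (its modulus of continuity tends to $0$ as $|x-y|\to 0$), so $p_{+}(R)$ and $p_{-}(R)$ are attained; pick $x,y\in\overline{R}$ with $p(x)=p_{+}(R)$ and $p(y)=p_{-}(R)$. The relevant separation is then $|x-y|\le\operatorname{diam}(R)$. Writing the short and long sides of $R\in\cB_{N}$ as $\ell$ and $N\ell$, so that $|R|=N\ell^{2}$, the key computation is the identity
$$
\operatorname{diam}(R)^{2}=(N\ell)^{2}+\ell^{2}=|R|\Big(N+\frac{1}{N}\Big),
\qquad\text{hence}\qquad
\frac{N}{\operatorname{diam}(R)^{2}}=\frac{1}{|R|}\cdot\frac{N^{2}}{N^{2}+1}.
$$
This is the point of Definition~\ref{def1.4}: the factor $N$ placed inside the logarithm is exactly what makes $N/\operatorname{diam}(R)^{2}$ comparable to $1/|R|$, up to the harmless factor $N^{2}/(N^{2}+1)\in[\tfrac12,1]$.

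Provided $\operatorname{diam}(R)<\sqrt{N}$ --- which by the identity above holds precisely when $|R|<N^{2}/(N^{2}+1)$ --- I may insert the pair $(x,y)$ into the defining inequality of Definition~\ref{def1.4} and use monotonicity of the logarithm to get
$$
\frac{1}{p_{-}(R)}-\frac{1}{p_{+}(R)}
\le\frac{c_{N}\log N}{\log\big(N/|x-y|^{2}\big)}
\le\frac{c_{N}\log N}{\log\big(N/\operatorname{diam}(R)^{2}\big)}
=\frac{c_{N}\log N}{\log(1/|R|)-\log(1+N^{-2})}.
$$
Multiplying by $\log(1/|R|)$ gives the claimed estimate once the lower-order term $\log(1+N^{-2})\le N^{-2}$ is absorbed, which is immediate on the bulk range (say $|R|\le e^{-1}$), where $\log(1/|R|)\ge 1$ dominates it.

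The step I expect to be the main obstacle is not this substitution but the careful bookkeeping where $R$ is nearly of area one. When $|R|$ approaches $N^{2}/(N^{2}+1)$ from either side the quantity $\log(1/|R|)-\log(1+N^{-2})$ degenerates and $\operatorname{diam}(R)$ reaches $\sqrt{N}$, so the single-pair argument breaks down. On this sliver I would instead compare each point of $R$ to its centre $x_{0}$: one always has $|x-x_{0}|\le\tfrac12\operatorname{diam}(R)<\sqrt{N}$, so Definition~\ref{def1.4} applies to every pair $(x,x_{0})$ and controls the oscillation uniformly, while $\log(1/|R|)\le\log(1+N^{-2})\le N^{-2}$ is then so small that the product falls well below $c_{N}\log N$ for $N\gg 1$. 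Reconciling the clean exponent $c_{N}$ with the short-side term $\ell^{2}$ hidden inside $\operatorname{diam}(R)^{2}$, and tracking the strict threshold $|x-y|<\sqrt{N}$, is the only genuinely delicate part of the argument; everything else is a direct substitution.
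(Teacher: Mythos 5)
Your proposal is correct and follows essentially the same route as the paper: both arguments dispose of $|R|\ge 1$ trivially, pick the two extremal points $x,y$ of $p(\cdot)$ on $R$ (justified by the continuity that the hypothesis forces on $1/p(\cdot)$), and apply the defining inequality of Definition~\ref{def1.4} together with the observation that $N/|x-y|^2$ is comparable to $1/|R|$ for $R\in\cB_{N}$. The only difference is that you explicitly track the factor $N^{2}/(N^{2}+1)$ by which $N|R|$ falls short of $\mathrm{diam}(R)^{2}$ (hence your absorption step and the sliver near $|R|=1$), whereas the paper simply writes $|R|^{1/p(y)-1/p(x)}\le\bigl(|x-y|^{2}/N\bigr)^{1/p(y)-1/p(x)}$ and $|x-y|<\sqrt{N}$, silently absorbing that same harmless $1+N^{-2}$ slop.
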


\begin{proof}
When $|R|\ge 1$, there is nothing to prove. 
Suppose that $|R|<1$. 
Since $p(\cdot)$ is continuous, 
there exist $x,y\in R$ such that 
$p(x)=p_{-}(R)$ and $p(y)=p_{+}(R)$.
It follows that 
\begin{align*}
\lefteqn{
|R|^{\frac{1}{p_{+}(R)}-\frac{1}{p_{-}(R)}}
=
|R|^{\frac{1}{p(y)}-\frac{1}{p(x)}}
\le
\l(\frac{|x-y|^2}{N}\r)^{\frac{1}{p(y)}-\frac{1}{p(x)}}
}\\ &=
\exp\l\{
\l(\frac{1}{p(y)}-\frac{1}{p(x)}\r)
\log\l(\frac{|x-y|^2}{N}\r)
\r\}
=
\exp\l\{
\l(\frac{1}{p(x)}-\frac{1}{p(y)}\r)
\log\l(\frac{N}{|x-y|^2}\r)
\r\}
\\ &\le
\exp\{\log(N^{c_{N}})\}
=
N^{c_{N}},
\end{align*}
where we have used $|x-y|<\sqrt{N}$ and 
the $N$-modified local log-H\"{o}lder continuity of $p(\cdot)$.
\end{proof}

\begin{lemma}[{\rm\cite[Lemma 2.4]{CrDiFi}}]\label{lem3.2}
Suppose that $p(\cdot)$ is 
log-H\"{o}lder continuous at infinity. 
Let $P(x)=(e+|x|)^{-M}$, $M\ge 2$. 
Then there exists a constant $c$ depending on 
$M$, $p(\8)$ and $c_{\8}$ such that 
given any set $E$ and any function $F$ 
such that $0\le F(y)\le 1$, $y\in E$, 
$$
\int_{E}F(y)^{p(y)}\,dy
\le
c\int_{E}F(y)^{p(\8)}\,dy
+
c\int_{E}P(y)^{p(\8)}\,dy,
$$
$$
\int_{E}F(y)^{p(\8)}\,dy
\le
c\int_{E}F(y)^{p(y)}\,dy
+
c\int_{E}P(y)^{p(\8)}\,dy.
$$
\end{lemma}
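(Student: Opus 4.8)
The plan is to reduce both displayed inequalities to a single pointwise estimate on $E$ and then integrate. The guiding observations are two: first, since $0\le F(y)\le 1$, the map $a\mapsto F(y)^{a}$ is nonincreasing, so the sign of $p(y)-p(\8)$ decides whether swapping $p(y)$ for $p(\8)$ raises or lowers $F(y)^{p(y)}$; second, the log-H\"{o}lder condition at infinity, $|p(y)-p(\8)|\log(e+|y|)\le c_{\8}$, is exactly what is needed to tame the resulting discrepancy once it has been written as a power of $(e+|y|)$. The one genuinely clever choice is to split $E$ not at the level $F=\tfrac12$ or similar, but at the weight $P$ itself, i.e. according to whether $F(y)\ge P(y)$ or $F(y)<P(y)$; this is what makes the factor $1/F(y)$ that appears below controllable by the quantity $(e+|y|)^{M}$.

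For the first inequality I would argue pointwise on the two regions. On $\{F\ge P\}$ I write $F(y)^{p(y)}=F(y)^{p(\8)}\,F(y)^{p(y)-p(\8)}$ and bound the last factor by a constant: when $p(y)\ge p(\8)$ it is $\le 1$ because $0\le F\le 1$, while in the harder subcase $p(y)<p(\8)$ I use $1/F(y)\le 1/P(y)=(e+|y|)^{M}$ to obtain
$$
F(y)^{p(y)-p(\8)}
\le
(e+|y|)^{M(p(\8)-p(y))}
=
\exp\l\{M\,(p(\8)-p(y))\,\log(e+|y|)\r\}
\le
e^{Mc_{\8}}.
$$
On the complementary region $\{F<P\}$ I instead bound $F(y)^{p(y)}\le P(y)^{p(y)}=P(y)^{p(\8)}\,P(y)^{p(y)-p(\8)}$, and since $P(y)^{p(y)-p(\8)}=(e+|y|)^{M(p(\8)-p(y))}\le e^{Mc_{\8}}$ regardless of the sign of $p(\8)-p(y)$, this gives $F(y)^{p(y)}\le e^{Mc_{\8}}P(y)^{p(\8)}$ there. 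Setting $c=e^{Mc_{\8}}$ and combining the two regions yields the pointwise bound $F(y)^{p(y)}\le c\,F(y)^{p(\8)}+c\,P(y)^{p(\8)}$, and integrating over $E$ delivers the first claimed inequality.

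The second inequality is the mirror image, with the roles of $p(y)$ and $p(\8)$ interchanged. On $\{F\ge P\}$ I write $F(y)^{p(\8)}=F(y)^{p(y)}\,F(y)^{p(\8)-p(y)}$ and bound the extra factor by $e^{Mc_{\8}}$ exactly as above (using $F\le 1$ when $p(\8)\ge p(y)$ and $1/F\le(e+|y|)^{M}$ in the opposite case), while on $\{F<P\}$ the monotonicity $F<P\le 1$ gives directly $F(y)^{p(\8)}\le P(y)^{p(\8)}$. Integrating produces the second inequality with the same constant $c=e^{Mc_{\8}}$, which depends only on $M$ and $c_{\8}$ and is therefore of the form permitted in the statement.

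I expect no serious obstacle here; the argument is elementary and the estimates are uniform in $E$ and $F$. The only point requiring care is the bookkeeping of the two subcases according to the sign of $p(y)-p(\8)$ on the region $\{F\ge P\}$, so that a \emph{single} constant handles both simultaneously, and the verification that choosing the splitting threshold to be $P$ is precisely what converts the dangerous factor $1/F(y)$ into $(e+|y|)^{M}$, which the infinity log-H\"{o}lder hypothesis then absorbs after multiplication by the exponent gap.
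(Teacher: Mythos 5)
Your proof is correct: splitting $E$ at the threshold $F=P$, treating the two sign cases of $p(y)-p(\8)$ on $\{F\ge P\}$, and using $1/F\le (e+|y|)^{M}$ there so that the log-H\"{o}lder condition at infinity absorbs the discrepancy all work as claimed, yielding both inequalities with $c=e^{Mc_{\8}}$. The paper offers no proof of this lemma---it is quoted from \cite{CrDiFi}---and your argument is essentially the standard one given in that reference, so there is nothing further to compare.
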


\noindent
{\it Proof of Theorem \ref{thm1.5}.} 
We may assume that $f$ is nonnegative. 
We first linearize the operator $K_{N}$.
For $k\in\N$, we denote by $\cD_k$ 
the family of all dyadic cubes 
$Q=2^{-k}(m+[0,1)^2)$, $m\in\Z^2$. 
For each $Q\in\cD_k$ we choose 
a rectangle $R(Q)\in\cB_{N}$, 
such that $R(Q)\supset Q$. 
We denote the operator $T_k$ as 
$$
T_kf(x)
=
\sum_{Q\in\cD_k}
\fint_{R(Q)}f(y)\,dy\,\chi_{Q}(x).
$$
By definition it is easy to see that 
$$
T_kf(x)\le K_{N}f(x)
$$
for any choice of rectangles $\{R(Q)\}$. 
On the other hand, 
there is a sequence of linearized operators $\{T_kf\}$ 
which converge pointwise to $K_{N}f$ 
as $k$ tends to infinity. 
Thus, by the Fatou theorem 
we need only prove Theorem \ref{thm1.5} 
with $K_{N}$ replaced by $T_k$ 
with a constant $C$ not depending on $k$. 

By homogeneity we may assume that 
$\|f\|_{p(\cdot)}=1$. Then 
$$
\rho_{p(\cdot)}(f)
=
\int_{\R^2}f(x)^{p(x)}\,dx\le 1.
$$
Decompose $f$ as $f_1+f_2$, where 
$f_1=f\chi_{\{x:\,f(x)>1\}}$ 
and 
$f_2=f\chi_{\{x:\,f(x)\le 1\}}$.
Then 
$$
\rho_{p(\cdot)}(f_i)\le 1
\text{ for }
i=1,2.
$$
Let 
\begin{align*}
C_1&=\sup_{R\in\cB_{N}}
|R|^{\frac{1}{p_{+}(R)}-\frac{1}{p_{-}(R)}},
\\
C_2&=(\log N)^{2/p_{-}}.
\end{align*}

\noindent{\bf The estimate for $f_1$.} 
We shall verify that, 
if $\lm_1=C_1^{p_{-}}C_2$, then 
\begin{equation}\label{3.1}
\rho_{p(\cdot)}\l(\frac{T_kf_1}{\lm_1}\r)
=
\int_{\R^2}\l(\frac{T_kf_1(x)}{\lm_1}\r)^{p(x)}\,dx
\le C.
\end{equation}
It follows from H\"{o}lder's inequality that 
\begin{align*}
\lefteqn{
\rho_{p(\cdot)}\l(\frac{T_kf_1}{\lm_1}\r)
}\\ &=
\sum_{Q\in\cD_k}
\int_{Q}
\l(\frac{1}{\lm_1}\r)^{p(x)}
\l(\fint_{R(Q)}f_1(y)\,dy\r)^{p(x)}
\,dx
\\ &\le
\sum_{Q\in\cD_k}
\int_{Q}
\l(\frac{1}{\lm_1}\r)^{p(x)}
\l(\fint_{R(Q)}f_1(y)^{\frac{p_{-}(R(Q))}{p_{-}}}\,dy\r)^{\frac{p_{-}p(x)}{p_{-}(R(Q))}}
\,dx
\\ &=
\sum_{Q\in\cD_k}
\int_{Q}
\l(\frac{1}{\lm_1}\r)^{p(x)}
\l(\frac{1}{|R(Q)|}\r)^{\frac{p_{-}p(x)}{p_{-}(R(Q))}}
\l(\int_{R(Q)}f_1(y)^{\frac{p_{-}(R(Q))}{p_{-}}}\,dy\r)^{\frac{p_{-}p(x)}{p_{-}(R(Q))}}
\,dx.
\end{align*}
There holds, for $|R(Q)|\ge 1$, 
$$
\l(\frac{1}{C_1^{p_{-}}}\r)^{p(x)}
\l(\frac{1}{|R(Q)|}\r)^{\frac{p_{-}p(x)}{p_{-}(R(Q))}}
\le
\l(\frac{1}{|R(Q)|}\r)^{p_{-}},
$$
where we have used $C_1\ge 1$ and 
$\frac{p(x)}{p_{-}(R(Q))}\ge 1$.
Also, there holds, for $|R(Q)|<1$, 
\begin{align*}
\lefteqn{
\l(\frac{1}{C_1^{p_{-}}}\r)^{p(x)}
\l(\frac{1}{|R(Q)|}\r)^{\frac{p_{-}p(x)}{p_{-}(R(Q))}}
}\\ &\le
\l(\frac{1}{|R(Q)|}\r)^{\frac{p_{-}p(x)}{p_{+}(R(Q))}-\frac{p_{-}p(x)}{p_{-}(R(Q))}}
\l(\frac{1}{|R(Q)|}\r)^{\frac{p_{-}p(x)}{p_{-}(R(Q))}}
\\ &=
\l(\frac{1}{|R(Q)|}\r)^{\frac{p_{-}p(x)}{p_{+}(R(Q))}}
=
\l(\frac{1}{|R(Q)|}\r)^{\frac{p_{-}p(x)}{p_{+}(R(Q))}-p_{-}}
\l(\frac{1}{|R(Q)|}\r)^{p_{-}}
\\ &\le
\l(\frac{1}{|R(Q)|}\r)^{p_{-}},
\end{align*}
where we have used 
$$
C_1
\ge
|R(Q)|^{\frac{1}{p_{+}(R(Q))}-\frac{1}{p_{-}(R(Q))}}
\text{ and }
\frac{p(x)}{p_{+}(R(Q))}\le 1.
$$
We see that by the definition of $f_1$ 
\begin{align*}
\lefteqn{
\l(\int_{R(Q)}f_1(y)^{\frac{p_{-}(R(Q))}{p_{-}}}\,dy\r)^{\frac{p_{-}p(x)}{p_{-}(R(Q))}}
}\\ &\le
\l(\int_{R(Q)}f_1(y)^{p(y)}\,dy\r)
^{\frac{p_{-}p(x)}{p_{-}(R(Q))}-p_{-}}
\l(\int_{R(Q)}f_1(y)^{\frac{p(y)}{p_{-}}}\,dy\r)^{p_{-}}
\\ &\le
\l(\int_{R(Q)}f_1(y)^{\frac{p(y)}{p_{-}}}\,dy\r)^{p_{-}},
\end{align*}
where we have used 
$$
\l(\int_{R(Q)}f_1(y)^{p(y)}\,dy\r)
^{\frac{p_{-}p(x)}{p_{-}(R(Q))}-p_{-}}
\le
\l(\int_{\R^2}f(y)^{p(y)}\,dy\r)
^{\frac{p_{-}p(x)}{p_{-}(R(Q))}-p_{-}}
\le 1.
$$
These yield 
$$
\rho_{p(\cdot)}\l(\frac{T_kf_1}{\lm_1}\r)
\le
\sum_{Q\in\cD_k}
\int_{Q}
\l(\frac{1}{C_2}\r)^{p(x)}
\l(\fint_{R(Q)}f_1(y)^{\frac{p(y)}{p_{-}}}\,dy\r)^{p_{-}}
\,dx.
$$
Therefore, 
since $R(Q)\supset Q$ and 
$\frac{p(x)}{p_{-}}\ge 1$, 
\begin{align*}
\rho_{p(\cdot)}\l(\frac{T_kf_1}{\lm_1}\r)
&\le
\frac{1}{(\log N)^2}
\int_{\R^2}K_{N}[f_1^{p(\cdot)/p_{-}}](x)^{p_{-}}\,dx
\\ &\le C
\int_{\R^2}f_1(x)^{p(x)}\,dx
\le C,
\end{align*}
where we have used \eqref{1.1}.

\noindent{\bf The estimate for $f_2$.} 
We shall verify that, 
if $\lm_2=C_2$, then 
\begin{equation}\label{3.2}
\rho_{p(\cdot)}\l(\frac{T_kf_2}{\lm_2}\r)
=
\int_{\R^2}\l(\frac{T_kf_2(x)}{\lm_2}\r)^{p(x)}\,dx
\le C.
\end{equation}
Since $f_2\le 1$,
we immediately see that
$$
F
=
\frac{1}{\lm_2}\fint_{R(Q)}f_2(y)\,dy
\le 1.
$$
Therefore, by Lemma \ref{lem3.2}, 
with $P(x)=(e+|x|)^{-2}$, 
\begin{align*}
\rho_{p(\cdot)}\l(\frac{T_kf_2}{\lm_2}\r)
&=
\sum_{Q\in\cD_k}
\int_{Q}
\l(\frac{1}{\lm_2}\fint_{R(Q)}f_2(y)\,dy\r)^{p(x)}
\,dx
\\ &\le
C\sum_{Q\in\cD_k}
\int_{Q}
\l(\frac{1}{\lm_2}\fint_{R(Q)}f_2(y)\,dy\r)^{p(\8)}
\,dx
+
C\sum_{Q\in\cD_k}
\int_{Q}P(x)^{p(\8)}\,dx.
\end{align*}
Since $p(\8)\ge 2$ and 
the cubes $Q\in\cD_k$ are disjoint, 
we can immediately estimate the second term:
$$
\sum_{Q\in\cD_k}
\int_{Q}P(x)^{p(\8)}\,dx
=
\int_{\R^2}P(x)^{p(\8)}\,dx
\le C.
$$
We shall estimate the first term. 
It follows that 
\begin{align*}
\lefteqn{
\sum_{Q\in\cD_k}
\int_{Q}
\l(\frac{1}{\lm_2}\fint_{R(Q)}f_2(y)\,dy\r)^{p(\8)}
\,dx
}\\ &\le
\frac{1}{(\log N)^2}
\sum_{Q\in\cD_k}
\int_{Q}K_{N}f_2(x)^{p(\8)}\,dx
\\ &\le C
\int_{\R^2}f_2(x)^{p(\8)}\,dx,
\end{align*}
where we have used \eqref{1.1}.
Since $f_2\le 1$ 
we can apply Lemma \ref{lem3.2} again,
\begin{align*}
\int_{\R^2}f_2(x)^{p(\8)}\,dx
&\le
C\int_{\R^2}f_2(x)^{p(x)}\,dx
+
C\int_{\R^2}P(x)^{p(\8)}\,dx
\le C.
\end{align*}
Altogether, we obtain \eqref{3.2}. 

\noindent{\bf Conclusion.} 
The estimates \eqref{3.1}, \eqref{3.2} and 
Lemma\ref{lem3.1} yield the theorem.


\begin{thebibliography}{999}

\bibitem{CaCrFi}
C.~Capone, D.~Cruz-Uribe and A.~Fiorenza,
\emph{The fractional maximal operator and fractional integrals on variable  $L^p$ spaces},
Rev. Mat. Iberoam., 
\textbf{23} (2007), no.~3, 743--770.

\bibitem{Co} A.~C\'{o}rdoba,
\emph{The Kakeya maximal function and the spherical summation multiplier},
Amer.~J. math., 
\textbf{99} (1977), no.~1, 1--22.

\bibitem{CrDiFi}
D.~Cruz-Uribe, L.~Diening and A.~Fiorenza,
\emph{A new proof of the boundedness of maximal operators on variable  Lebesgue spaces},
Boll. Unione Mat. Ital. (9), 
\textbf{2} (2009), no.~1, 151--173.

\bibitem{CrFiMaPe}
D.~Cruz-Uribe, A.~Fiorenza, J.~M.~Martell and C.~P\'{e}rez,
\emph{The boundedness of classical operators on variable $L^p$ spaces},
Ann. Acad. Sci. Fenn. Math., 
\textbf{31} (2006), no.~1, 239--264.

\bibitem{CrFiNe}
D.~Cruz-Uribe, A.~Fiorenza and C.~J.~Neugebauer,
\emph{The maximal function on variable $L^p$ spaces},
Ann. Acad. Sci. Fenn. Math., 
\textbf{28} (2003), no.~1, 223--238, and 
\textbf{29} (2004), no.~1, 247--249.

\bibitem{Die1}
L.~Diening,
\emph{Maximal function on generalized Lebesgue spaces $L^{p(\cdot)}$},
Math. Inequal. Appl., 
\textbf{7(2)} (2004), no.~2, 245--253.

\bibitem{Die2} \bysame,
\emph{Maximal functions on Musielak-Orlicz spaces and generalized Lebesgue spaces},
Bull. Sci. Math., 
\textbf{129} (2005), no.~8, 657--700.

\bibitem{Die3} \bysame,
\emph{Habilitation},
Universit\"{a}t Freiburg, 2007.

\bibitem{DiHaNe}
L.~Diening, P.~H\"{a}st\"{o} and A.~Nekvinda,
\emph{Open problems in variable exponent Lebesgue and Sobolev spaces},
FSDONA 2004 Proceedings, pages 38--52, 
Academy of Sciences of the Czech Republic, Prague, 2005.

\bibitem{Gr}
L.~Grafakos,
{\it Modern Fourier Analysis},
volume 250 of {\it Graduate Texts in Mathematics}.
Springer, New York, 2nd edition, 2008.

\bibitem{KM} 
V.~Kokilashvili and A.~Meskhi, 
\emph{Two-weighted norm inequalities for the double Hardy transforms and  strong fractional maximal functions in variable exponent Lebesgue space},
Spectral theory, function spaces and inequalities, 105--124, 
Oper. Theory Adv. Appl., 
\textbf{219} (2012).

\bibitem{Ko} T.~S.~Kopaliani, 
\emph{A note on strong maximal operator in $L^{p(\cdot)}(\R^n)$ spaces},
Proc. A. Razmadze Math. Inst.,
\textbf{145} (2007), 43--46.

\bibitem{KR} 
O.~Kov\'{a}cik and J.~R\'{a}kosn\'{i}k,
\emph{On spaces $L^{p(x)}$ and $W^{k,p(x)}$},
Czechoslovak Math. J., 
\textbf{41(116)} (1991), no.~4, 592--618.

\bibitem{Le} A.~K.~Lerner,
\emph{On some questions related to the maximal operator on variable $L^p$ spaces},
Trans. Amer. Math. Soc., 
\textbf{362} (2010), no.~8, 4229--4242.

\bibitem{Ne} A.~Nekvinda,
\emph{Hardy-Littlewood maximal operator on $L^{p(x)}(\R^n)$},
Math. Inequal. Appl., 
\textbf{7} (2004), no.~2, 255--265.

\bibitem{Str} 
J.-O.~Str\"{o}mberg,
\emph{Maximal functions associated to rectangles with uniformly distributed directions},
Ann. Math. (2), 
\textbf{107} (1978), no.~2, 399--402.

\end{thebibliography}
\end{document}